\theoremstyle{theorem}
\newtheorem{theorem}{Theorem}
\newtheorem{lemma}{Lemma}
\newtheorem{corollary}{Corollary}
\theoremstyle{definition}
\newcommand{\ha}{\frac{1}{2} \degree}
\begin{document}

\title{Adventitious angles problem: the lonely fractional derived angle}
\markright{The lonely fractional derived angle}

\author{Yong Kong and Shaowei Zhang}

\maketitle

\begin{abstract}
In the ``classical'' adventitious angle problem,
for a given set of three angles $a$, $b$, and $c$ measured in integral degrees
in an isosceles triangle, 
a fourth angle $\theta$ (the derived angle), 
also measured in integral degrees, 
is sought.
We generalize the problem to find $\theta$ in fractional degrees.
We show that the triplet $(a, b, c)=(45\degree, 45\degree, 15\degree)$
is the only combination that leads to  
 $\theta = 7\ha$ as the fractional derived angle.
\end{abstract}

%
%
%

\noindent

The familiar problem of adventitious angles 
deals with an isosceles triangle
as illustrated in Figure.~\ref{F:tri}
where $AB=AC$.
The problem asks to find the angle $\theta$ (the \emph{derived} angle)
for certain given values of the angles $a$, $b$, and $c$.
In the following we will use the notation
$(a, b, c; \theta)$ to denote a problem and its solution.

The problem is also known as Langley's problem,
who first proposed the particular case of 
$(20\degree, 60\degree, 50\degree; 30\degree)$
\cite{langley1922}.
The angles are called ``adventitious'' due to the
fact that among $113564$ possible triplets of 
$a$, $b$, and $c$ 
(with disregard for mirror images and trivial cases of $b=c$),
only $53$ of them yield an angle of $\theta$ with integral numbers of degrees
\cite{tripp1975}.

These seemingly innocent problems
are more difficult than their simple appearances
\cite{coxeter1967}. 
Straightforward use of angle chasing
will not be sufficient to solve these problems.
Ingenious constructions are usually needed
\cite{quadling1977, quadling1978, sakashita1977, tripp1975}.

\begin{figure}[h]
\begin{center}


\includegraphics[angle=0,width=0.6\columnwidth]{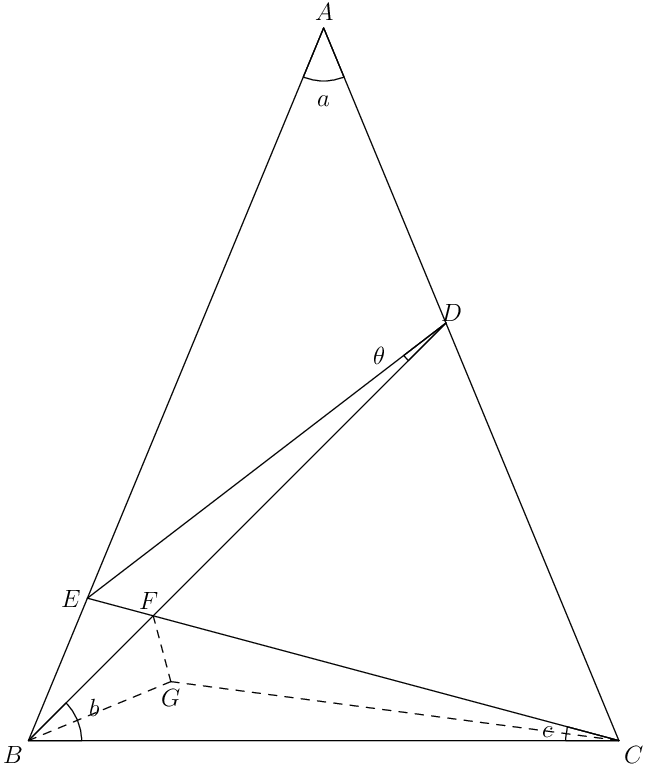}

\end{center}
\caption{
\label{F:tri}
}
\end{figure}

In the ``classical'' adventitious angle problem,
the given angles $a$, $b$, and $c$
as well as the derived angle $\theta$
are all measured in integral degrees.
We generalize the problem to find $\theta$ in fractional degrees
while still keeping the angles $a$, $b$, and $c$
in integral degrees.
We'll first prove that for the angle $\theta$ to be fractional degrees,
its denominator is at most $2$.
A computer search shows that 
$(45\degree, 45\degree, 15\degree; 7\ha)$
is the only potential fractional solution.
For this particular case 
we'll give a pure geometrical proof and a trigonometric proof
that 
$(45\degree, 45\degree, 15\degree)$ is the only adventitious set
of angles that has fractional derived angle
$\theta=7\ha$.

\begin{theorem}
\label{T:1}
For the angle $\theta$ to be a rational number of degrees
when angles $a$, $b$, and $c$ are measured in integral degrees, 
the denominator of $\theta$
is at most 2.
\end{theorem}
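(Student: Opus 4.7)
The plan is to reduce Theorem~\ref{T:1} to a cyclotomic fact about the field $\mathbb{Q}(\zeta_{360})$. The first step I would take is to apply the trigonometric form of Ceva's theorem (equivalently, to chain applications of the law of sines through the sub-triangles of Figure~\ref{F:tri}). This produces an identity of the shape
$$\sin\theta \cdot \sigma_1 \;=\; \sin(\alpha - \theta) \cdot \sigma_2,$$
where $\alpha = \angle BAC$ is the apex angle and $\sigma_1,\sigma_2$ are products of sines of integer-degree angles determined by $a$, $b$, $c$, and the base angle. Since the base angles $\angle ABC = \angle ACB$ are determined by $a$, $b$, $c$ and so are integer numbers of degrees, $\alpha = 180\degree - 2\angle ABC$ is too.

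Next I would expand $\sin(\alpha - \theta)$ and solve for $\cot\theta$, obtaining
$$\cot\theta \;=\; \frac{\sigma_1/\sigma_2 + \cos\alpha}{\sin\alpha}.$$
The right-hand side lies in the field $F := \mathbb{Q}(\cos 1\degree)$: indeed, every $\sin k\degree$ and $\cos k\degree$ with $k \in \mathbb{Z}$ is a polynomial in $\cos 1\degree$, via $\sin k\degree = \cos((90-k)\degree)$ and the Chebyshev polynomials. This field $F$ is the maximal real subfield of the cyclotomic field $L := \mathbb{Q}(\zeta_{360})$, where $\zeta_{360} := e^{i\pi/180}$, and since $0 \neq \sin 1\degree \in F$, one has $L = F(i)$.

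The final step is cyclotomic. Assuming $\theta$ is rational in degrees, $\omega := e^{i\theta\pi/180}$ is a root of unity. Writing $c := \cot\theta \in F$, the equation $\cos\theta - c\sin\theta = 0$ becomes $(1+ic)\,\omega^2 + (1-ic) = 0$, so
$$\omega^2 \;=\; \frac{c+i}{c-i} \;\in\; F(i) \;=\; L.$$
Thus $\omega^2$ is a root of unity lying in $L$. Since $4 \mid 360$, the torsion subgroup of $L^{\times}$ is exactly $\mu_{360}$; hence $(\omega^2)^{360}=1$, which translates back to $e^{4\pi i\theta}=1$, i.e.\ $2\theta\in\mathbb{Z}$. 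The denominator of $\theta$ in lowest terms therefore divides $2$.

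The main obstacle I foresee is the book-keeping in the Ceva step, ensuring that every auxiliary angle introduced is itself an integer number of degrees so that $\sigma_1/\sigma_2$ really sits in $F$ (rather than in some quadratic extension arising from, say, careless half-angle substitutions); once that is secured, the cyclotomic identification $\mu(L)=\mu_{360}$ closes the argument. The real content of Theorem~\ref{T:1} lies not in its brief proof but in the strong restriction it imposes on candidate $\theta$'s, which is what makes the authors' subsequent finite computer search tractable.
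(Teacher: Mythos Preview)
Your argument is correct and is essentially the paper's own: both show that $\tan\theta$ (equivalently $\cot\theta$) lies in the maximal real subfield $\mathbb{Q}_{360}^+$ via a sine-rule identity, then pass to $\mathbb{Q}(\zeta_{360})$ and invoke the fact that its only roots of unity are the $360$th roots of unity. The paper packages the cyclotomic step as three short lemmas and quotes Tripp's explicit formula for $\tan\theta$ in place of your Ceva derivation, while your direct computation $\omega^{2}=(c+i)/(c-i)\in\mathbb{Q}_{360}^{+}(i)$ plays the role of their Lemma~3; the substance is the same.
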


To prove the theorem we first prove the following three lemmas. 
In the following, $\mathbb{Q}$ and $\mathbb{R}$ stand for the fields of rational and real numbers.
Let $n$ be a positive integer and $\alpha = \frac{2\pi}{n}$.
Let $i=\sqrt{-1}$ and
denote  $\zeta_n$
an $n$th root of unity
$e^{\frac{2\pi i}{n}}
= e^{\alpha i}$,
$\mathbb{Q}_n = \mathbb{Q}(\zeta_n)$
the $n$th cyclotomic field,
and $\mathbb{Q}_n^+$
the maximal real subfield of $\mathbb{Q}_n$.

\begin{lemma} \label{L:1}
If $4|n$, 
then $\mathbb{Q}_n^+ = \mathbb{Q}_n \cap \mathbb{R}
= \mathbb{Q} \left( \tan(\frac{\alpha}{2}) \right)$.
\end{lemma}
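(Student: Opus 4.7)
The plan is to establish the two equalities by sandwiching $\mathbb{Q}(\tan(\alpha/2))$ between $\mathbb{Q}_n \cap \mathbb{R}$ and $\mathbb{Q}_n^+$ via the standard structure theorem for cyclotomic fields. Recall that complex conjugation generates a subgroup of order $2$ in $\mathrm{Gal}(\mathbb{Q}_n/\mathbb{Q})$ whenever $n > 2$, so its fixed field is simultaneously $\mathbb{Q}_n \cap \mathbb{R}$ and $\mathbb{Q}(\zeta_n + \zeta_n^{-1}) = \mathbb{Q}(2\cos\alpha) = \mathbb{Q}(\cos\alpha)$. Granted this, proving the lemma reduces to the purely trigonometric claim that $\mathbb{Q}(\cos\alpha) = \mathbb{Q}(\tan(\alpha/2))$.

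For the inclusion $\mathbb{Q}(\cos\alpha) \subseteq \mathbb{Q}(\tan(\alpha/2))$, I would set $t = \tan(\alpha/2)$ and invoke the Weierstrass half-angle identity
\[ \cos\alpha = \frac{1-t^2}{1+t^2}, \]
which exhibits $\cos\alpha$ as a rational function of $t$ and hence places it in $\mathbb{Q}(t)$. This direction uses nothing about $n$.

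The reverse inclusion $\mathbb{Q}(\tan(\alpha/2)) \subseteq \mathbb{Q}(\cos\alpha) = \mathbb{Q}_n^+$ is where the hypothesis $4 \mid n$ plays its essential role. Since $4 \mid n$ we have $\zeta_4 = i \in \mathbb{Q}_n$, so
\[ \sin\alpha \;=\; \frac{\zeta_n - \zeta_n^{-1}}{2i} \;\in\; \mathbb{Q}_n; \]
being real, it lies in $\mathbb{Q}_n \cap \mathbb{R} = \mathbb{Q}_n^+$. Combining this with the companion half-angle identity
\[ \tan(\alpha/2) \;=\; \frac{\sin\alpha}{1+\cos\alpha}, \]
which is valid because $n \geq 4$ guarantees $0 < \alpha \leq \pi/2$ and hence $1+\cos\alpha > 0$, shows $\tan(\alpha/2) \in \mathbb{Q}_n^+$.

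The one delicate step, and the reason the hypothesis $4 \mid n$ cannot be dropped, is the assertion that $\sin\alpha \in \mathbb{Q}_n$: without $i$ in the field this generally fails, as in the case $n=3$ where $\sin(2\pi/3) = \sqrt{3}/2 \notin \mathbb{Q}_3^+ = \mathbb{Q}$. Everything else is routine manipulation of the two half-angle formulas.
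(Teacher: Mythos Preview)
Your argument is correct and follows essentially the same route as the paper's: both identify $\mathbb{Q}_n^+$ with $\mathbb{Q}(\cos\alpha)$, use $4\mid n$ to get $i\in\mathbb{Q}_n$ and hence $\sin\alpha\in\mathbb{Q}_n^+$, and then apply a half-angle identity in each direction. The only cosmetic differences are that the paper uses $\tan(\alpha/2)=\frac{1-\cos\alpha}{\sin\alpha}$ where you use the equivalent $\frac{\sin\alpha}{1+\cos\alpha}$, and that you spell out the Galois-theoretic reason for $\mathbb{Q}_n\cap\mathbb{R}=\mathbb{Q}(\cos\alpha)$ and the nonvanishing of the denominator, which the paper leaves implicit.
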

\begin{proof}
By definition $\mathbb{Q}_n^+ = \mathbb{Q} \left( \zeta_n + \zeta_n^{-1} \right)
= \mathbb{Q} \left( \cos \alpha  \right)$.
Since $4|n$, we have  $i \in \mathbb{Q}_n$. 
Hence $\sin \alpha = (\zeta_n - \zeta_n^{-1}) / (2i) \in \mathbb{Q}_n^+$.
The identity 
$
 \tan (\frac{\alpha}{2}) = \frac{1-\cos \alpha}{\sin \alpha}
$
shows $\tan (\frac{\alpha}{2}) \in \mathbb{Q}_n^+$,
so $\mathbb{Q} \left( \tan(\frac{\alpha}{2}) \right) \subset \mathbb{Q}_n^+$.
On the other hand,
$
 \cos \alpha=
\frac{1-\tan^2(\frac{\alpha}{2})}{1+\tan^2(\frac{\alpha}{2})},
$
so
$\mathbb{Q}_n^+
= \mathbb{Q} \left( \cos \alpha  \right)
\subset
\mathbb{Q} \left( \tan (\frac{\alpha}{2}) \right)$.
\end{proof}

\begin{lemma} \label{L:2}
Assume $4|n$ and $4|m$. If $\mathbb{Q}_m^+ \subset \mathbb{Q}_n^+$, 
then $m|n$.
\end{lemma}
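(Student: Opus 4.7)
The plan is to reduce the hypothesis $\mathbb{Q}_m^+ \subset \mathbb{Q}_n^+$ about real subfields to the containment of full cyclotomic fields $\mathbb{Q}_m \subset \mathbb{Q}_n$, and then to extract $m \mid n$ from this using the standard behavior of $\varphi$.

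First, I would exploit the assumption $4 \mid m$ and $4 \mid n$ to obtain $i \in \mathbb{Q}_m$ and $i \in \mathbb{Q}_n$. Since complex conjugation is an order-$2$ automorphism of $\mathbb{Q}_m$ fixing $\mathbb{Q}_m^+$, adjoining $i$ recovers the full cyclotomic field: $\mathbb{Q}_m = \mathbb{Q}_m^+(i)$, and similarly $\mathbb{Q}_n = \mathbb{Q}_n^+(i)$. The hypothesis $\mathbb{Q}_m^+ \subset \mathbb{Q}_n^+$ then upgrades directly to
\[
\mathbb{Q}_m \;=\; \mathbb{Q}_m^+(i) \;\subset\; \mathbb{Q}_n^+(i) \;=\; \mathbb{Q}_n.
\]

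Next, I would convert this containment into the numerical statement $m \mid n$. Since $\zeta_m \in \mathbb{Q}_n$, the compositum $\mathbb{Q}_m \mathbb{Q}_n$ equals $\mathbb{Q}_n$. The well-known identity $\mathbb{Q}_m \mathbb{Q}_n = \mathbb{Q}_{\mathrm{lcm}(m,n)}$ thus gives $\mathbb{Q}_{\mathrm{lcm}(m,n)} = \mathbb{Q}_n$, so in particular $\varphi(\mathrm{lcm}(m,n)) = \varphi(n)$. Write $L = \mathrm{lcm}(m,n)$; then $n \mid L$, and
\[
\varphi(L) \;=\; \frac{L}{n}\,\varphi(n) \prod_{\substack{q \text{ prime} \\ q \mid L,\, q \nmid n}} \!\!\left(1 - \tfrac{1}{q}\right).
\]
Setting this equal to $\varphi(n)$ forces the product on the right, together with $L/n$, to collapse to $1$.

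The subtle step, and what I expect to be the main obstacle, is ensuring this collapse gives $L = n$. Without a hypothesis on $n$, one has the well-known coincidence $\varphi(2k) = \varphi(k)$ for $k$ odd, which would allow $L = 2n$ in principle. This is exactly where the assumption $4 \mid n$ is used: because $2$ already divides $n$, the prime $2$ cannot appear in the product over primes dividing $L$ but not $n$, so every new prime factor $q$ of $L/n$ satisfies $q \geq 3$ and contributes a factor strictly less than $1$ that cannot be canceled by an integer $L/n \geq 2$ with matching prime support. Hence $L = n$, so $m \mid n$.
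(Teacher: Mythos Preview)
Your proof is correct, and your first step---adjoining $i$ to pass from $\mathbb{Q}_m^+ \subset \mathbb{Q}_n^+$ to $\mathbb{Q}_m \subset \mathbb{Q}_n$---is exactly what the paper does. Where you diverge is in extracting $m \mid n$ from $\mathbb{Q}_m \subset \mathbb{Q}_n$. The paper quotes the fact (Marcus, \emph{Number Fields}) that for even $n$ the only roots of unity in $\mathbb{Q}_n$ are the $n$th roots of unity; since $\zeta_m$ lies in $\mathbb{Q}_n$ this gives $\zeta_m^n = 1$ and hence $m \mid n$ in one line. You instead go through the compositum identity $\mathbb{Q}_m\mathbb{Q}_n = \mathbb{Q}_{\mathrm{lcm}(m,n)}$ and a $\varphi$ computation to force $\mathrm{lcm}(m,n)=n$. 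Your route avoids citing an outside result and makes explicit why the parity hypothesis on $n$ is needed (to rule out the $\varphi(2k)=\varphi(k)$ ambiguity), at the cost of a longer computation; the paper's route is shorter but leans on the cited classification. One small expository point: your final paragraph only discusses primes of $L/n$ that are new to $n$, but you should also note that primes of $L/n$ already dividing $n$ contribute no $(1-1/q)$ factor and so would make the product exceed $1$; writing $L/n = k_1 k_2$ with $\gcd(k_2,n)=1$ and observing $k_1\,\varphi(k_2)=1$ makes this clean.
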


\begin{proof}
From the assumptions we have
$\mathbb{Q}_m = \mathbb{Q}_m^+(i) \subset 
\mathbb{Q}_n^+(i) = \mathbb{Q}_n$.
When $n$ is even, the only roots of unity in $\mathbb{Q}_n$
are the $n$th roots of unity
\cite[Corollary 3, p. 19]{marcus1977}.
Since $\zeta_m \in \mathbb{Q}_m \subset \mathbb{Q}_n$,
$\zeta_m$ must be one of the $n$th roots of unity: 
$\zeta_m^n = e^{\frac{2 \pi i n}{m}} = 1$.
This happens if and only if $m|n$.
\end{proof}

\begin{lemma} \label{L:3}
If $\beta=\frac{2\pi}{360m}$ for some integer $m$ 
such that $\tan \beta \in \mathbb{Q}_{360}^+$, then $m|2$.
\end{lemma}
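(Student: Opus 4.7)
The plan is to recognize $\beta$ as half of the fundamental angle of an appropriate cyclotomic field so that Lemma~\ref{L:1} applies directly. Specifically, I would set $n = 180m$, so that writing $\alpha = 2\pi/n$ one has $\alpha/2 = \pi/n = \pi/(180m) = \beta$. Since $180 = 4 \cdot 45$, the divisibility $4 \mid n$ holds for every integer $m$, and Lemma~\ref{L:1} yields the identification $\mathbb{Q}_n^+ = \mathbb{Q}(\tan\beta)$.

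With this identification, the hypothesis $\tan \beta \in \mathbb{Q}_{360}^+$ translates immediately into the field containment $\mathbb{Q}_{180m}^+ \subset \mathbb{Q}_{360}^+$. Both $180m$ and $360$ are divisible by $4$, so Lemma~\ref{L:2} applies and forces $180m \mid 360$, which is precisely the conclusion $m \mid 2$. One may restrict to $m > 0$ without loss of generality since $\tan$ is odd and $\mathbb{Q}_{360}^+$ is closed under negation; the case $m = 0$ is excluded because $\beta$ would be undefined.

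The argument is essentially bookkeeping once the correct choice of $n$ is made: the only nontrivial step is recognizing that Lemmas~\ref{L:1} and~\ref{L:2} have been set up precisely so that the single substitution $n = 180m$ converts a statement about $\tan \beta$ into a cyclotomic-field inclusion to which Lemma~\ref{L:2} can be applied verbatim. I expect no real obstacle beyond checking that the hypothesis $4 \mid 180m$ is automatic from $4 \mid 180$.
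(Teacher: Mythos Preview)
Your proposal is correct and follows exactly the paper's approach: set $n=180m$, apply Lemma~\ref{L:1} to get $\mathbb{Q}_{180m}^+=\mathbb{Q}(\tan\beta)\subset\mathbb{Q}_{360}^+$, then apply Lemma~\ref{L:2} to obtain $180m\mid 360$. The only additions are your explicit verification that $4\mid 180m$ and the handling of the sign of $m$, which the paper leaves implicit.
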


\begin{proof}
From Lemma~\ref{L:1}, $\mathbb{Q}_{180m}^+ = \mathbb{Q}(\tan \beta) 
\subset \mathbb{Q}_{360}^+
$,
hence $180m|360$ by Lemma \ref{L:2}, so $m|2$.
\end{proof}

Proof of Theorem~\ref{T:1}:
\begin{proof}[\unskip\nopunct]
A formula for $\tan \theta$ can be obtained by applications of sine rule
\cite{tripp1975}
\[
 \tan \theta = 
\frac{ \sin(b+c) \sin (c) (\cos a + \cos 2b )}
     { \sin (b) (\cos a + \cos 2c) + \cos(b+c) \sin (c) (\cos a + \cos 2b) } .
\]
The conclusion follows from Lemma \ref{L:3}.
\end{proof}

\begin{theorem}
The only fractional derived angle, when angles $a$, $b$, and $c$
are integral numbers of degrees,
is $\theta=7\ha$ 
when 
the triplet $(a, b, c)=(45\degree, 45\degree, 15\degree)$.
\end{theorem}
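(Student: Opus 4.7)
The plan is to combine Theorem~\ref{T:1} with a finite enumeration of integer triplets and a targeted verification of the one surviving case. Theorem~\ref{T:1} forces $2\theta\in\mathbb{Z}$ whenever $\theta$ is rational, so a \emph{fractional} rational derived angle must satisfy that $2\theta$ is an odd integer. The admissible triplets $(a,b,c)$ with integer degree values form a finite set --- the $113564$ configurations counted by Tripp~\cite{tripp1975} --- so in principle one evaluates $\theta$ from the closed formula of Theorem~\ref{T:1} for each triplet and records the cases where $2\theta$ is odd.

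I would carry out this search with exact, not merely numerical, arithmetic. A floating-point check cannot in principle distinguish a value arbitrarily close to a half-integer from one exactly equal to it; Lemma~\ref{L:3} provides the necessary rigor because, for each triplet, $\tan\theta$ is an explicit element of the real cyclotomic field $\mathbb{Q}_{360}^+$, while any candidate value $\tan\theta_0$ with $\theta_0$ a half-integer number of degrees lies in the larger field $\mathbb{Q}_{720}^+$, where equality of algebraic numbers is decidable. In practice a high-precision numerical sweep would first flag candidates, and a symbolic check confirms them. The outcome of the enumeration is that exactly one triplet survives: $(a,b,c)=(45\degree,45\degree,15\degree)$, for which $\theta=7\ha$.

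Finally, for this one surviving case I would verify $\theta=7\ha$ rigorously. Substituting $a=b=45\degree$ and $c=15\degree$ into the tangent formula and simplifying with the standard surd values of the sines and cosines at $15\degree$, $30\degree$, $45\degree$, $60\degree$, and $75\degree$ reduces the right-hand side to $\sqrt{6}-\sqrt{3}+\sqrt{2}-2$, which is the exact value of $\tan 7\ha$. The synthetic construction promised in the introduction supplies an independent geometric proof of the same equality.

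The main obstacle, as I see it, is justifying the enumeration: a proof that rests on examining about $10^5$ configurations must be organized so that every test is exact, because the distinction between $\theta=7\ha$ and a generic $\theta$ merely close to $7.5\degree$ is invisible to finite precision. Theorem~\ref{T:1} neutralizes this worry by confining $\tan\theta$ to $\mathbb{Q}_{360}^+$, so the decision reduces to equality of algebraic numbers with explicitly known minimal polynomials, and the surviving case is then handled in closed form.
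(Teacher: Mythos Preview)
Your overall architecture matches the paper's: invoke Theorem~\ref{T:1} to force $2\theta\in\mathbb{Z}$, run a finite computer enumeration over the integer triplets, and then verify the lone survivor $(45\degree,45\degree,15\degree;7\ha)$ by an independent argument. The two proofs diverge in their execution on both halves. For the enumeration, the paper is content with a Maple sweep at $100$ decimal digits, whereas you (rightly) observe that finite precision cannot in principle decide equality and propose confirming candidates by an exact comparison of algebraic numbers in $\mathbb{Q}_{720}^+$; this is more rigorous than what the paper actually carries out. For the verification of the surviving case, the paper supplies two arguments you do not give: a synthetic one via the incenter $G$ of $\triangle BFC$, showing $\triangle BEF\cong\triangle BGF$ and then $\triangle BED\cong\triangle BGC$, and a trigonometric one based on Quadling's sine identity rather than Tripp's tangent formula. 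Your route---substituting into the tangent formula and reducing to $\tan\theta=\sqrt6-\sqrt3+\sqrt2-2=\tan 7\ha$---is a correct third alternative, though you state the outcome of the simplification without displaying it.
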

\begin{proof}

A quick computer search shows that $(45\degree, 45\degree, 15\degree; 7\ha)$
is the only potential case.
The search is carried out using software package Maple (version 12) 
with $100$ decimal digits.
We will give two proofs that $(45\degree, 45\degree, 15\degree; 7\ha)$
is a solution:
an elementary geometry proof and a trigonometric proof.

For the geometrical proof,
let $G$ be the incenter of triangle
$\triangle BFC$ where three angle bisectors of the triangle meet
(Figure~\ref{F:tri}). 
Since 
$\angle{EBF} = \angle{GBF} = 22\ha$
and $\angle{BFE} = \angle{BFG} = 60\degree$,
it follows that
$\triangle{BEF} \cong \triangle{BGF}$.
Hence $BE=BG$.
Since $\angle{BDC} = \angle{DCB} = 67\ha$,
we have $BC=BD$,
which leads to
$\triangle{BED} \cong \triangle{BGC}$.
Hence 
$\theta = \angle{EDB} = \angle{GCB} = 7\ha$.

For the trigonometric proof, we use the identity from 
Eq. (6) of \cite{quadling1977}:
\[
 \frac{\sin \theta}{ \sin(b+c-\theta) }
= \frac{\cos(b + \frac{a}{2}) \sin (c)  \cos(b-\frac{a}{2}) }
        {\cos(c - \frac{a}{2}) \sin (b)  \cos(c+\frac{a}{2})}.
\]
Substituting the values of $a$, $b$, and $c$ 
into both sides of this identity, 
and simplifying the right side,
we have
\begin{align*}
 \frac{\sin \theta}{ \sin(\frac{\pi}{4} + \frac{\pi}{12}-\theta) }
&= \frac{\cos \frac{3\pi}{8} \sin \frac{\pi}{12} \cos \frac{\pi}{8} }
        {\cos \frac{\pi}{24} \sin \frac{\pi}{4}  \cos \frac{5\pi}{24}}\\
&= \frac{(2 \sin \frac{\pi}{8} \cos \frac{\pi}{8} )\sin \frac{\pi}{24} \sin \frac{\pi}{12} }
        {(2 \sin \frac{\pi}{24} \cos \frac{\pi}{24})  \sin \frac{\pi}{4}  \cos \frac{5\pi}{24} } \\
&= \frac{\sin \frac{\pi}{24}}
        {\sin (\frac{\pi}{4} + \frac{\pi}{12} - \frac{\pi}{24})}.
\end{align*}
Hence $\theta=\frac{\pi}{24} = 7\ha$.
\end{proof}

Theorem~\ref{T:1} can be generalized to basic units other than degree
($\pi/180$) \cite{tripp1975}.

\begin{corollary}
Given the set of angles $a$, $b$, and $c$ each in multiples of $\pi/N$ radians where $N$ is a positive integer divisible by $4$, 
if $\theta = k \pi/N$ and $k$ is a rational number,
then the denominator of $k$ is at most $2$.
\end{corollary}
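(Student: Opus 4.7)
The plan is to transplant the proof of Theorem~\ref{T:1} from the cyclotomic field $\mathbb{Q}_{360}^+$ to $\mathbb{Q}_{2N}^+$. The hypothesis $4\mid N$ yields $4\mid 2N$, so Lemmas~\ref{L:1} and~\ref{L:2} remain applicable. Since each of $a$, $b$, $c$ is an integer multiple of $\pi/N = 2\pi/(2N)$, every sine and cosine appearing in the sine-rule formula for $\tan\theta$ recorded in the proof of Theorem~\ref{T:1} lies in $\mathbb{Q}_{2N}^+$; hence $\tan\theta \in \mathbb{Q}_{2N}^+$.

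Next I would state and prove the exact analog of Lemma~\ref{L:3}: if $\beta = \pi/(Nm)$ for some positive integer $m$ and $\tan\beta \in \mathbb{Q}_{2N}^+$, then $m\mid 2$. The argument is identical in form to that of Lemma~\ref{L:3}: Lemma~\ref{L:1} applied with $n = Nm$ (divisible by $4$ because $4\mid N$) gives $\mathbb{Q}_{Nm}^+ = \mathbb{Q}(\tan\beta) \subset \mathbb{Q}_{2N}^+$, and Lemma~\ref{L:2} then forces $Nm\mid 2N$, so $m\mid 2$. To reduce the Corollary to this analog, write $k=p/q$ in lowest terms and let $\beta = \pi/(Nq)$, so that $\theta = p\beta$ and $\pi/N = q\beta$. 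Choose integers $u$, $v$ with $up+vq=1$ by B\'ezout; then $\beta = u\theta + v(\pi/N)$. By the tangent addition formula and the tangent multiple-angle identities, $\tan\beta$ is a rational function of $\tan\theta$ and $\tan(\pi/N)$, both of which lie in $\mathbb{Q}_{2N}^+$. Hence $\tan\beta \in \mathbb{Q}_{2N}^+$, and the analog of Lemma~\ref{L:3} yields $q\mid 2$.

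I do not expect any serious obstacle; the Corollary is essentially Theorem~\ref{T:1} with the numerical constant $180$ replaced by $N$, and the hypothesis $4\mid N$ is precisely what keeps Lemmas~\ref{L:1} and~\ref{L:2} in force. The one point requiring a little care is the B\'ezout-plus-tangent-addition reduction, where one should be mindful of potential indeterminate forms in the addition identity; this can be sidestepped by writing $\tan = \sin/\cos$ and noting that the underlying sines and cosines lie in $\mathbb{Q}_{2N}^+$ unconditionally.
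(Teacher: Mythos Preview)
Your proposal is correct and follows exactly the route the paper intends: the Corollary is stated there without a separate proof, as the evident generalization obtained by replacing $180$ by $N$ (equivalently $360$ by $2N$) throughout Lemmas~\ref{L:1}--\ref{L:3} and the proof of Theorem~\ref{T:1}. Your B\'ezout reduction from a general rational $k=p/q$ to the unit-numerator case $\beta=\pi/(Nq)$ actually supplies more detail than the paper does---the same passage is tacitly assumed already in the proof of Theorem~\ref{T:1}, since Lemma~\ref{L:3} as written only treats angles of the form $\pi/(180m)$.
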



\begin{biog}

\item[Yong Kong] (yong.kong@yale.edu)
\begin{affil}
Department of Molecular
Biophysics and Biochemistry\\
W.M. Keck Foundation Biotechnology Resource Laboratory \\
Yale University\\
333 Cedar Street, New Haven, CT 06510
\end{affil}
\item[Shaowei Zhang] (shaowei.zhang@yale.edu)
\begin{affil}
ITS\\
Yale University\\
25 Science Park, New Haven, CT 06511
\end{affil}
\end{biog}

\vfill\eject

\end{document}